\documentclass[11pt]{article}
\usepackage{amsfonts}
\usepackage[width=17cm, left=1cm]{geometry}
\usepackage{amssymb}
\usepackage{amsmath}
\usepackage{amsthm}
\usepackage{tikz}
\usetikzlibrary{graphs}

\newtheorem{definition}{Definition}
\newtheorem{proposition}{Proposition}
\newtheorem{theorem}{Theorem}
\newtheorem{remark}{\noindent Remark}
\newtheorem{lemma}{Lemma}

\newtheorem{problem}{Problem}

\title{Classification of measurable functions of several variables and matrix distributions}

\author{A.~M.~Vershik \thanks{St.~Petersburg Department of Steklov Institute of Mathematics and St.~Petersburg State University, St.~Petersburg, Russia; Institute for Information Transmission Problems, Moscow, Russia. Supported by the ISF grant
21-11-00152.}}
\date{}

\begin{document}

\maketitle
\begin{flushright}\emph{To the memory of my correspondence teacher I.~M.~Gelfand}\end{flushright}


\bigskip
\begin{abstract}
We consider the notion of the matrix (tensor) distribution of a measurable function of several variables. On the one hand, it is an invariant of this function with respect to a certain group of transformations of variables; on the other hand, there is a special probability measure in the space of matrices (tensors) that is invariant under actions of natural infinite permutation groups. The intricate interplay of both interpretations of matrix (tensor) distributions makes them an important subject of modern functional analysis. We state and prove a theorem saying that, under certain conditions on a measurable function of two variables, its matrix distribution is a~complete invariant.

\end{abstract}

\section{Introduction, statement of the problem, overview of basic notions}

\subsection{General remarks}

The matrix distribution of a function $f$ of $k$ variables is a measure on the set of tensors of rank~$k$ (which are matrices if $k=2$), or, in other words, a random matrix or tensor. For $k=1$, it is simply a~Bernoulli measure on the space of sequences of (independent) values of $f$.
For $k=2,3 \dots$, this random matrix (tensor) is constructed from a multidimensional Bernoulli sample of values.

Here is a simplest example for  $k=2$: if $f$ is a metric on a space $X$ with continuous measure  (a~metric triple, see below and
\cite{VVZ}), then its matrix distribution is a measure on the restrictions of $f$ to countable Bernoulli grids of points in $X$, i.e., a random metric on a countable Bernoulli subset. As we will see, in the general case, for the matrix distribution of a function to be a complete invariant of this function with respect to permutations of variables, we should endow it with an additional structure.

The notion of matrix distribution first appeared, for various reasons, in works by Aldous~\cite{Al1}, Gromov~\cite{Gr} (in the case of metrics), and the author~\cite{VUM}  (see below). Later, it was studied in a~series of papers: \cite{V02, VH,VH1, VVZ}. We refine and extend the previous research and pose several questions related to applications of the suggested techniques.

\subsection{Definition of isomorphism of measurable functions}

The space with continuous measure on which we define a measurable function $f$ is usually the same up to isomorphism, namely, it is  the standard  probability space (Lebesgue space). However, in this space one can define various $\sigma$-subalgebras, their products, and identifications. Thus, the notion of an individual variable is given a precise interpretation as a parameter associated to some $\sigma$-subalgebra of the $\sigma$\nobreakdash-algebra in the domain of $f$. In the principal case, the only case we are considering, the measure space where $f$ is defined is equipped with a structure of a {\it direct product of  standard $\sigma$-algebras}, and each subalgebra has its own measure and determines its own variable. Thus, the domain of $f$ is represented as the direct product  $(X_1 \times X_2 \times \ldots \times X_k, \mu_1 \times \mu_2 \times \ldots \times \mu_k)$  of two or finitely many standard Lebesgue spaces with continuous measures. The $\sigma$-algebra of all measurable sets in this space is a direct product of $\sigma$-algebras: 
$${\frak A}={\frak A}_1\times {\frak A}_2\times \dots \times{\frak A}_k.$$

Note that here the variables are independent in the sense of measure theory; generalizations to more complicated cases (where variables that are independent in the common sense are not independent in the sense of probability theory) can also be conveniently interpreted in terms of decomposing the global $\sigma$-algebra  into various configurations of $\sigma$-subalgebras. Here, we are discussing the simplest case of a~direct product. It is worth mentioning that, in the author's view, the theory of finite collections of $\sigma$-subalgebras of the standard space and their invariants should become a central subject in geometric measure theory.

Let us introduce a notion of isomorphism of measurable functions that is natural for this category.

\begin{definition}
Two measurable functions $f_i(\cdot,\cdot,\dots,\cdot)$, $i=1,2$,  of  $k$ variables, defined on products of spaces
$\prod_{j=1}^k  (X_j^i, \mu_j^i)$, $i=1,2$, respectively, are said to be isomorphic if there exist invertible measurable transformations
$T_j:X_j^1 \longrightarrow X_j^2$, $j=1,\dots k$,  such that $T_j\mu_j^1=\mu_j^2$ and
$$ f_2(T_1 x_1,\dots ,T_k x_k)=f_1(x_1,x_2,\dots x_k)$$ for almost all collections of variables.
\end{definition}

This definition can be further specified for particular cases. For instance, if there is a fixed isomorphism between the $\sigma$-algebras ${\frak A}_j$, then, setting $T_j = T$ for all $j$ and considering symmetric functions, we define the notion of isomorphism for symmetric functions of several variables. {\it We pose the problem of finding the invariants of measurable functions with respect to the notion of isomorphism introduced above}. Essentially, we are discussing the action of the group of measure-preserving transformations (or a product of such groups) on the space of $\bmod 0$ classes of  measurable functions, and the {\it orbit space} of this action. A description of this space is suggested below.

\subsection{One or several variables}

In the case of a single variable, we are talking about isomorphism between two measurable functions, $f_1,f_2$, defined on the interval $[0,1]$: $f_2(Tx)=f_1(x)$ for a.e.\ $x$, where $T$ is a $\bmod 0$ invertible measurable mapping from the interval to itself preserving the Lebesgue measure.
This problem was solved long  ago in terms of Rokhlin's theory of measurable partitions (see \cite{Ro}). First of all, it is obvious that for measurable $\bmod 0$ one-to-one  functions $f:X\rightarrow \Bbb R$, the distribution of $f$   in the usual case (the measure $f_*(\mu)$ on the image) is a complete invariant of $f$; while the space of Borel measures on the image is precisely the orbit space of the isomorphism problem for one-to-one functions.

In the case of non-one-to-one functions, there arises a measurable partition into the preimages of points (with multiplicities), to which Rokhlin's theorem on the general form of invariants of a~measurable partition can be applied. Thus, the classification problem for functions of one variable is completely solved, and both a complete invariant and the space of complete invariants are described.

It may seem that the subsequent discussion suggests that the problem can also be completely solved in the case of several independent variables. However,  classification of functions of two or more variables requires entirely new ideas. The answers (invariants and their spaces) may be quite different. To tackle the classification problem for functions $f$ of several variables, we suggest to study the restrictions of $f$ to random countable subsets (grids) of a certain type  (instead of the distribution of~$f$ itself or of simplest transformations of $f$, as in the one-dimensional case). The probability distributions of these restrictions allow us, for highly nontrivial reasons, to obtain complete invariants. Essentially, our method consists in approximating functions defined on continuous sets with functions on countable subsets endowed with a specific (Bernoulli or Bernoulli-like) structure. This reminds the {\it Monte Carlo approach} to computations. Interestingly, even in the one-dimensional case, the justification of this method relies on the pointwise ergodic theorem, according to which the distribution of a~function coincides with the limit of its empirical distributions. However, in the multidimensional case the situation is more complicated, at least because there is no equivalent of the distribution of~$f$; it is precisely the matrix distribution of $f$ that is designed to replace it.

 \subsection{Grids in measure spaces and purity of functions}

We begin with measurable functions $f$ of two variables defined on a measure space. We may assume without loss of generality that $f$ is defined on the square  $[0,1]^2$ endowed with the Lebesgue measure~$\mu^2$.

\begin{definition}
A function $f$ (of $k=2$ variables) is said to be pure if the following condition is satisfied: if we fix almost every value of one variable, say $x$ (or $y$), then the restrictions  $y \rightarrow f(\cdot,y)$ (resp.,  $x \rightarrow f(x, \cdot)$)  of $f$ 
to the other variable
are $\bmod 0$ one-to-one mappings.
\end{definition}

Clearly, this definition generalizes the notion of a one-to-one function of one variable. 

It is natural to extend this definition to a function $f$ of $k$ variables with $k>2$ by requiring the same condition of being one-to-one to be satisfied if we fix all variables except one; in this case, we also say that $f$ is {\it pure}. But if the number of variables $k$ is greater than~$2$, then, fixing almost every collection of some
$k-r$ variables, $r=1,2,\dots, k-1$, we may require the restrictions to the remaining $r$ variables to be  $\bmod 0$ one-to-one; if this condition is satisfied, we say that the original function of $k$ variables is $r$-pure. However, we will restrict ourselves to the notion of $1$-pure functions (which we simply call pure).

Our main theorem will apply to pure functions of $k$  variables with $k>1$.  The extension of the obtained results to general functions of  $k>1$ variables is more or less the same as the analogous extension to non-one-to-one functions of one variable in Rokhlin's theorem. In our exposition, we are dealing with pure functions of two variables, because all the reasoning remains valid for pure functions with a greater number of variables.
The  {\it purification} of a function $f(\cdot,\cdot)$ of two variables defined on
$X_1\times X_2$ is the function $\bar f$ on the space $X_1/\xi_1\times X_2/\xi_2$ where $\xi_i$ is the partition of  $X_i$ into the maximal classes of points with the same restrictions of $f$ to the other variable. Clearly, a pure function coincides with its purification.

Consider a pure function $f$ of two variables defined on a product
$X=(X_1 \times X_2, \mu^1\times \mu^2)$ of Lebesgue spaces. We consider all functions to be real just for convenience; in fact, the range may be any standard Borel space.

We will identify the factors $X_1, X_2$ with the interval $[0,1]$, and assume that both measures coincide with the Lebesgue measure.
The construction of isomorphism invariants of such a function $f$  is based on the study of the restrictions of  $f$ to certain random countable subsets, traditionally referred to as {\it grids}.\footnote {Although their role is different from the role of grids in the theory of computation.} Thus, a grid is an element of the space $(X\times X)^{\infty}=X^{\infty}\times X^{\infty}$.
In the definition, we assume that the space $X=X_1\times X_2$ is defined explicitly (it is obvious that everything is well defined with respect to identification $\bmod 0$).

We will need two types of grids. The first one is Bernoulli grids defined as follows. Choose two (in general, different) sequences of independent identically distributed points, $\{x_n\}_n$ and $\{y_m\}_m$. The collection of points $\{x_n,y_m\}_{(n,m)}$ will be called a {\it Bernoulli grid}.
If $f$ is symmetric and we are studying invariants of symmetric functions (for instance, if $f$ is a metric), then the sequences coincide and the grid is symmetric.

It is more convenient to consider two-sided grids $\{x_n\}_n, \{y_m\}_m$, i.e., to assume that 
$n,m$ run over the group
$\Bbb Z$; however, everything  remains valid even in the case of the semigroup ${\Bbb Z}_+$.

Obviously, every such sequence is dense in $X_1$ (or $X_2$), while the grid is dense in  $X=X_1\times X_2$; it is also clear that any two Bernoulli grids in $X$ are isomorphic.

The second type of grids, which we will discuss later, is locally finite grids: these are increasing sequences of finite subsets
 $H_n=H^1_n\times H^2_n \subset X=X_1 \times X_2$,  where  $\{H^i_n\}_{n \in \Bbb N}$ for $i=1,2$ are uniformly distributed sequences on $X_i$, $i=1,2$. In both cases, we are interested in asymptotic properties of restrictions of measurable functions to grids.

\section{Matrix distributions and classification of functions}

\subsection{Definition}
Let $f(\cdot,\cdot)$ be a pure function of two variables on $X=X_1\times X_2$ and
$(\{x_n\}_n, \{y_m\}_m)$ be a fixed Bernoulli grid. We regard the two-dimensional array $${||f(x_n, y_m)||_{(n,m)\in {\Bbb Z}^2}}$$
as an infinite (with four infinities) real matrix. Thus, we introduce a mapping 
$$F_f: X^{\infty}\times X^{\infty} \longrightarrow \operatorname{Mat}_{\infty \times \infty} ({\Bbb R}), $$
where
     $$F_f (\{x_n\}_n\times \{y_m\}_m)=||f(x_n, y_m)||_{(n,m)\in {\Bbb Z}^2}.$$

Now we can regard $F_f$ as a mapping from the Borel space of Bernoulli grids in the measure space
$(X^\infty,\mu^{\infty})\times (X^\infty,\mu^{\infty})$ to the space $\operatorname{Mat}_{\infty \times \infty} ({\Bbb R})$ of infinite matrices. It preserves the action of the group of translations on the image.

Since in what follows we will use the pointwise ergodic theorem, we will assume that all measurable functions under consideration are integrable. The extension of the results to arbitrary measurable functions can be obtained using various artificial techniques, but we will not delve into this here. 

In the general theory of Borel mappings, there is a notion of the mapping of Borel measures associated with a Borel mapping of spaces (see, e.g., \cite{Bo}); the definition involves the idea of full preimages. In accordance with this notion, we can define the image of a Bernoulli measure.

  \begin{definition}
For a pure function $f$, consider the image of the Bernoulli measure $\mu^{\infty}\times \mu^{\infty}$ under the mapping $F_f$:
  $$D_f=(F_f)_*(\mu^{\infty}\times \mu^{\infty}).$$
  The measure $D_f$ on the set of infinite matrices will be called the matrix distribution of~$f$.
 \end{definition}

Recall that this definition originated in the theory of classification of functions  in \cite{V02} and was discussed in \cite{VH, VVZ}. However, it seems that this object first appeared in  \cite{Al1} as a measure invariant under the group of permutations; also, it essentially appeared in \cite{Gr} as an invariant of metrics in measure spaces  (see below).

It clearly follows from the definition that the matrix distribution is a measure in the space of matrices invariant and ergodic with respect to the action of the group $S_{\Bbb Z}\times S_{\Bbb Z}$ of row and column permutations (in the symmetric case, of  the group  $\operatorname{DIAG}(S_{\Bbb Z}\times S_{\Bbb Z})$ of simultaneous row and column permutations).

The following lemma is obvious.

  \begin{lemma}
The matrix distribution (regarded as a measure on the space of matrices) is an invariant of a measurable function with respect to isomorphism (of measurable functions of two variables).
  \end{lemma}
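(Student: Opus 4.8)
The plan is to extract from the isomorphism data a measure-preserving map between the two grid spaces that intertwines the matrix-valued maps $F_{f_1}$ and $F_{f_2}$, and then to conclude by pushing the Bernoulli measure forward through the resulting commuting triangle.

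First I would unpack the hypothesis. By Definition~1, an isomorphism of $f_1$ and $f_2$ furnishes invertible measure-preserving maps $T_1\colon X_1^1\to X_1^2$ and $T_2\colon X_2^1\to X_2^2$ (with $T_j\mu_j^1=\mu_j^2$) satisfying $f_2(T_1x_1,T_2x_2)=f_1(x_1,x_2)$ for a.e.\ $(x_1,x_2)$. These induce a map on grid spaces
$$\Phi=(T_1)^{\infty}\times(T_2)^{\infty}\colon (\{x_n\}_n,\{y_m\}_m)\longmapsto(\{T_1x_n\}_n,\{T_2y_m\}_m),$$
carrying Bernoulli grids in the domain of $f_1$ to Bernoulli grids in the domain of $f_2$. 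Since $T_j$ sends $\mu_j^1$ to $\mu_j^2$, the countable power $(T_j)^{\infty}$ sends the Bernoulli measure $(\mu_j^1)^{\infty}$ to $(\mu_j^2)^{\infty}$; hence $\Phi$ is measure preserving, i.e.\ $\Phi_*(\mu^{\infty}\times\mu^{\infty})=\mu^{\infty}\times\mu^{\infty}$.

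Next I would verify the intertwining identity $F_{f_2}\circ\Phi=F_{f_1}$ $\bmod 0$. Entrywise this reads $f_2(T_1x_n,T_2y_m)=f_1(x_n,y_m)$ for all $(n,m)$, which is precisely the isomorphism relation evaluated at the grid points. With this in hand the lemma is immediate: pushing the Bernoulli measure through the triangle,
$$D_{f_1}=(F_{f_1})_*(\mu^{\infty}\times\mu^{\infty})=(F_{f_2})_*\Phi_*(\mu^{\infty}\times\mu^{\infty})=(F_{f_2})_*(\mu^{\infty}\times\mu^{\infty})=D_{f_2},$$
so the matrix distribution depends only on the isomorphism class of the function.

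The only point requiring care — the step I regard as the main (if modest) obstacle — is that the isomorphism relation holds merely $\bmod 0$: there is a null set $N\subset X_1^1\times X_2^1$ off which $f_2(T_1x,T_2y)=f_1(x,y)$, and I must ensure that, $\bmod 0$ in the grid space, \emph{all} matrix entries simultaneously avoid $N$. Since each pair $(x_n,y_m)$ of a Bernoulli grid is distributed according to $\mu_1^1\times\mu_2^1$, the event $\{(x_n,y_m)\in N\}$ has probability zero for every fixed $(n,m)$; as there are only countably many index pairs, their union is still null, so almost surely every entry agrees and the entrywise identity lifts to $F_{f_2}\circ\Phi=F_{f_1}$ $\bmod 0$. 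This is the lone place where the product (Bernoulli) structure of the grid and Fubini's theorem are genuinely used; everything else is a formal manipulation of pushforward measures.
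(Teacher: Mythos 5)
Your proof is correct and is essentially the argument the paper compresses into its one-line proof: the isomorphism $(T_1,T_2)$ induces a measure-preserving map of Bernoulli grid spaces intertwining $F_{f_1}$ and $F_{f_2}$, so the pushforward measures coincide. Your explicit treatment of the $\bmod 0$ issue via a countable union of null sets is a worthwhile detail the paper leaves implicit.
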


  \begin{proof}
If two functions, $f_1$ and $f_2$, are isomorphic, then it follows from the definitions that there exists an isomorphism between the spaces of matrices that maps the matrix distribution of $f_1$ to the matrix distribution of $f_2$. Here we use the fact that all Bernoulli grids are isomorphic to each other. Note that any isomorphism between the spaces of matrices, by definition, commutes with the group of row and column permutations.
  \end{proof}

\subsection{The recovery theorem and the completeness of invariants}

In the statement of the lemma, the function is not assumed to be pure. Obviously, the matrix distribution of a function coincides with the matrix distribution of its purification. However, the main challenge  lies in proving the {\it completeness of the suggested invariant}. It turns out that this problem for the matrix distribution, even for pure functions, is more delicate: it does not suffice to regard the matrix distribution as a measure on a subset in the space of matrices, {\it it is essential to preserve another action on this subset, induced by the group of translations of the grid ${\Bbb Z}^2$} in both variables, which acts in the space of matrices by translations of rows and columns. The point is that Bernoulli grids   $\{x_n\}\times \{y_m\}$ in our case are invariant with respect to the two-sided translation in both variables, i.e., the group ${\Bbb Z}^2$ (or with respect to the one-sided translation in both variables if we are dealing with the semigroup  ${\Bbb Z}^2_+$).

Therefore, the matrix distribution should be regarded as a measure on the space of matrices acted upon by this group. However, the standard definition of the image of a measure does not take into account an additional group action, so we must consider it separately. We will prove that two pure functions are isomorphic if their matrix distributions coincide and the action of  ${\Bbb Z}^2$
  on them is preserved. Without this additional condition, we cannot claim that
  the matrix distribution is a~complete invariant. However, in many (if fact, in almost all) cases, this caveat is not necessary, because the above-mentioned actions coincide automatically,  for various reasons.

To prove this, we need a corollary from the pointwise ergodic theorem for ergodic actions of the group 
$\mathbb Z^k$, which is crucial in problems of this kind. However, the author has not encountered this corollary in the literature on ergodic theory (see \cite{W}). {\it It allows one to uniquely recover an action of a~group on a measure space from its action on one (typical) orbit and some structure on that orbit.}

\begin{lemma}
Consider the group $G={\Bbb Z}^k$ and an invariant ergodic measure $\mu$ on $l^{\infty}(G)$. Let $x_0 \in l^{\infty}(G)$ be a typical point for $\mu$, i.e., a point whose orbit $O$ (regarded as a group) satisfies the following property: there is a distinguished countable collection of sequences of sets in $O$ along which the averages of the chosen functions exist. Then the functional 
$$E(\phi)=\lim_n |Q_n|^{-1} \sum_{g\in Q_n}\phi(gx_0)\equiv \bar \phi$$ 
is defined, and it uniquely determines $\mu$ as the unique measure on the space $X$ of functions on $G$ for which
 $$\int_X \phi(g_0 x)d\mu= \bar \phi.$$ 
Here ${Q_n}$ is a sequence of F{\o}lner sets, for example, the sequence of centrally symmetric cubes with side~$n$ in  ${\Bbb Z}^k$, and $|Q_n|$ is the number of points in $Q_n$.
\end{lemma}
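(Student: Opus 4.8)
The plan is to derive the statement as a direct consequence of the pointwise (multiparameter) ergodic theorem for $\mathbb{Z}^k$, combined with ergodicity and a separation argument. First I would fix a countable family $\Phi=\{\phi_1,\phi_2,\dots\}$ of bounded cylinder functions on $X$ (each depending on the values $x(g)$ at finitely many $g\in G$), chosen rich enough that the numbers $\int_X \phi_j\,d\mu$ determine all finite-dimensional marginals of a measure on the space of functions on $G$; for instance one may take finite products of bounded continuous functions of single coordinates with rational data, so that $\Phi$ is countable and separates Borel probability measures on $X$. These are the ``chosen functions'' of the statement, and each lies in $L^1(\mu)$ since it is bounded.

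Next I would apply the pointwise ergodic theorem (Wiener's theorem for the action of $G=\mathbb{Z}^k$ along the F{\o}lner sequence $\{Q_n\}$ of centrally symmetric cubes): for each fixed $\phi_j$ there is a set of full $\mu$-measure of base points $x_0$ for which
$$\lim_n |Q_n|^{-1}\sum_{g\in Q_n}\phi_j(g x_0)=\mathbb{E}_\mu[\phi_j\mid\mathcal I],$$
where $\mathcal I$ is the $\sigma$-algebra of $G$-invariant sets; the relevant sequences of sets in the orbit $O$ are the images $Q_n x_0$. Since $\mu$ is \emph{ergodic}, $\mathcal I$ is trivial and this conditional expectation is the constant $\int_X\phi_j\,d\mu$. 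Taking the intersection over the countably many $\phi_j$ of these full-measure sets yields a single set of full measure whose points are ``typical'' in the sense of the statement: for any such $x_0$, all the limits $E(\phi_j)=\bar\phi_j$ exist simultaneously and satisfy $\bar\phi_j=\int_X\phi_j\,d\mu$. This shows that $E$ is defined on $\Phi$ and agrees with integration against $\mu$; invariance of $\mu$ further gives $\int_X\phi(g_0x)\,d\mu=\int_X\phi(x)\,d\mu$ for every $g_0$, so the value $\bar\phi$ does not depend on the base point chosen in the orbit, which is what makes the orbit average a canonical datum.

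For the uniqueness (``$E$ uniquely determines $\mu$'') I would argue as follows. Suppose a probability measure $\nu$ on $X$ satisfies $\int_X\phi_j\,d\nu=\bar\phi_j$ for all $j$. Because $\Phi$ pins down every finite-dimensional marginal, $\nu$ and $\mu$ have identical marginals over all finite subsets of $G$, and therefore $\nu=\mu$ by the uniqueness clause of Kolmogorov's extension theorem on the product space of functions on $G$. Thus the single functional $E$---equivalently, the empirical averages read off from one typical orbit together with the values of the chosen functions on it (the ``structure on the orbit'')---recovers $\mu$, and with it the whole $G$-action, up to isomorphism.

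The step I expect to be the genuine obstacle is not any single estimate but the conceptual pivot at the ergodicity step: the bare ergodic theorem delivers only the conditional expectation $\mathbb{E}_\mu[\phi\mid\mathcal I]$, which in general encodes no more than the invariant $\sigma$-algebra, whereas the claim is that data living on a \emph{single} orbit reconstruct the \emph{entire} measure. It is precisely ergodicity that collapses $\mathbb{E}_\mu[\phi\mid\mathcal I]$ to the scalar $\int_X\phi\,d\mu$ and thereby makes one orbit suffice. The remaining care is bookkeeping---choosing a countable separating family so that a single full-measure set of typical points serves all test functions at once, and ensuring (via boundedness of the cylinder functions and the F{\o}lner structure of $\mathbb{Z}^k$) that the averaging theorem genuinely applies.
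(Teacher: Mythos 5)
Your proof is correct and follows essentially the same route as the paper's own (very terse) argument: existence of the orbit averages via the pointwise ergodic theorem for $\mathbb{Z}^k$ along the cubes $Q_n$, with ergodicity collapsing the conditional expectation to $\int\phi\,d\mu$, and uniqueness because the countable family of test functions is total, i.e.\ its integrals determine the measure. Your version merely makes explicit what the paper leaves implicit (the choice of a countable separating family of cylinder functions, the intersection of full-measure sets, and the appeal to uniqueness of measures with prescribed finite-dimensional marginals), so there is no substantive difference.
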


\begin{proof}
The existence of the limits is a direct consequence of the pointwise ergodic theorem, and the uniqueness of a measure with given limits follows from the fact that the specified set of functions is total and, therefore, the measure is uniquely determined if the integrals of all these functions are fixed.\end{proof}

\begin{remark}{\rm 
Slightly deviating from the main topic, we note that the above lemma can be viewed as a tool for studying the isomorphism problem for actions of amenable measure-preserving groups. The functional $E$ introduced above is linear on some space $\Phi_G$ of bounded functions on $G$ (whose image is total in all $L^1(G)$ spaces with respect to translation-invariant ergodic measures). The set of values of~$E$ is a metric invariant of the action (for a fixed choice of the space $\Phi_G$), and, since $E$ can be viewed as an invariant mean on the group, this opens up the possibility of studying the correspondence between certain classes of invariant means on the group and the measure types of actions with  invariant measure. In a broader context, such lemmas allow one to recover continuous objects from countable subobjects. Note that the relationship between invariant means on amenable groups and pointwise ergodic theorems for group actions is not sufficiently studied.}
\end{remark}

The above lemma implies the following.

\begin{lemma}
Let $f$ be a pure integrable function of two variables on $[0,1]^2$. The matrix distribution of~$f$, regarded as a measure on the set of matrices acted upon by the group  ${\Bbb Z}^2$ of translations leaving~$f$ invariant, is a complete system of invariants of~$f$.
\end{lemma}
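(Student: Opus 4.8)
The invariance direction---that isomorphic functions carry the same matrix distribution together with its $\mathbb{Z}^2$-action---is exactly the content of the first (``obvious'') lemma above, so the whole force of the statement lies in the converse: in \emph{reconstructing} $f$, up to isomorphism, from the pair consisting of $D_f$ and the $\mathbb{Z}^2$-action on it. The plan is to carry out this reconstruction explicitly. First I would invoke the preceding recovery lemma: since $D_f$ is $\mathbb{Z}^2$-ergodic, a $D_f$-typical matrix $M=\|f(x_n,y_m)\|$ together with the shift action of $\mathbb{Z}^2$ on its rows and columns determines $D_f$ via the averaging functional $E$; conversely, by the very definition $D_f=(F_f)_*(\mu^{\infty}\times\mu^{\infty})$, a.e.\ matrix drawn from $D_f$ is of this grid form and comes with its shift structure. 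Hence it suffices to recover $(X_1,\mu^1)$, $(X_2,\mu^2)$ and $f$ from a single typical $M$ using only the $\mathbb{Z}^2$-translations. This is precisely the place where remembering the action, rather than the bare measure on matrices, is indispensable, since the F{\o}lner averaging used below is performed along the $\mathbb{Z}^2$-orbit and not along arbitrary $S_{\mathbb Z}\times S_{\mathbb Z}$-rearrangements.

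Next I would reconstruct the factor spaces. Assign to each $x$ its \emph{row type} $r(x)=(f(x,y_m))_{m\in\mathbb{Z}}\in\mathbb{R}^{\mathbb{Z}}$ and to each $y$ its \emph{column type} $c(y)=(f(x_n,y))_{n\in\mathbb{Z}}$. The essential use of purity is that for $x\neq x'$ the set $\{y:f(x,y)=f(x',y)\}$ is null: for a.e.\ $y$ the partial map $x\mapsto f(x,y)$ is $\bmod\,0$ one-to-one, so equality of the two values forces $x=x'$. Since the points $\{y_m\}$ are i.i.d.\ and dense, it follows that $r$ is injective $\bmod\,0$, and likewise $c$; thus row types are in measurable $\bmod\,0$ bijection with the points of $X_1$, and column types with the points of $X_2$. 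Because the $x_n$ are i.i.d.\ with law $\mu^1$, the pointwise ergodic theorem (here, the law of large numbers) gives that the empirical measures $|Q_n|^{-1}\sum_{x_i\in Q_n}\delta_{r(x_i)}$ converge weakly to $r_*\mu^1$, which recovers $(X_1,\mu^1)$, up to isomorphism, as the space of row types equipped with the limiting measure, and symmetrically $(X_2,\mu^2)$ from the columns. Integrability of $f$ is what legitimizes these averages.

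Finally I would recover $f$ as the evaluation pairing between the two reconstructed spaces: by construction $f(x_n,y_m)=M_{nm}=r(x_n)_m=c(y_m)_n$, so on the reconstructed product of row types and column types, carrying the limiting measures, the map sending $(r(x),c(y))$ to the corresponding matrix entry is well defined $\bmod\,0$ (the $\bmod\,0$ bijections $r,c$ make it unambiguous) and is isomorphic to $f$. Every step---the type maps, their injectivity, the empirical limits, and the pairing---is a measurable functional of $M$ depending only on the $\mathbb{Z}^2$-shift data, and that data is determined by $(D_f,\ \mathbb{Z}^2\text{-action})$ away from a $D_f$-null set of atypical $M$; hence the isomorphism class of $f$ is determined. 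Combined with the invariance lemma, this establishes completeness.

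I expect the main obstacle to be not any single estimate but the careful $\bmod\,0$ bookkeeping needed to make the reconstruction \emph{canonical}: one must verify that the injectivity of the type maps survives after deleting a single null set (uniformly in the pair $x,x'$, via Fubini against purity), that the reconstructed space is neither smaller nor larger than $X_i$ (injectivity gives ``not smaller'', while density together with the empirical limit gives ``not larger''), and, above all, that the whole construction factors through the $\mathbb{Z}^2$-action alone. This last point is the genuinely delicate one emphasized in the text: the bare measure $D_f$ on matrices may fail to record how the translations are realized, so completeness can break down once the action is discarded, and it is exactly the retained $\mathbb{Z}^2$-structure that feeds the F{\o}lner averaging and hence the recovery lemma.
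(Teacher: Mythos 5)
Your overall strategy is essentially the paper's own: the paper's proof consists of citing a single ``characteristic property'' of pure functions, namely that for almost every Bernoulli grid the collection of slice functions $\phi_{x_n}=f(x_n,\cdot)$, $\psi_{y_m}=f(\cdot,y_m)$ is total in $L^1(X_1\times X_2,\mu\times\mu)$, and your row-type/column-type reconstruction is an unfolding of that property combined with the ergodic recovery lemma. However, two of your steps have genuine gaps as stated. First, you misquote purity. Purity does not say that for a.e.\ $y$ the one-variable map $x\mapsto f(x,y)$ is injective into $\mathbb R$; it says that the map $x\mapsto f(x,\cdot)$ into the space of measurable functions on $X_2$ is $\bmod\,0$ injective (this is exactly what makes a pure function coincide with its purification). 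Consequently, for $x\ne x'$ the set $\{y:f(x,y)=f(x',y)\}$ need not be null---it merely has measure strictly less than one---and your Fubini argument yields only that \emph{almost every pair} $(x,x')$ is separated by the row types. That is strictly weaker than $\bmod\,0$ injectivity of $r$: a map can separate a.e.\ pair and still be two-to-one everywhere (partition $[0,1]$ into the null classes $\{x,\,x+\tfrac12 \bmod 1\}$). Upgrading to genuine $\bmod\,0$ injectivity---equivalently, showing that the countably many functions $\psi_{y_m}$ generate the full $\sigma$-algebra of $X_1$---is precisely the content of the totality lemma the paper invokes, and it has to be proved, not obtained by Fubini alone.

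Second, the ``evaluation pairing'' is not well defined in the way you describe. For a generic pair $(x,y)$ there is no ``corresponding matrix entry'': the matrix records $f$ only at grid points $(x_n,y_m)$, which form a null subset of the product. Setting $\tilde f(r(x),c(y))=f(x,y)$ is legitimate once $r,c$ are $\bmod\,0$ bijections, but that definition uses $f$ itself, so it does not show that the pairing is a functional of $M$ or of $D_f$---which is the whole point of completeness. One must argue separately that $f(x,y)$ agrees a.e.\ with a measurable function $\Phi(r(x),c(y))$ (again the totality property: the product $\sigma$-algebra generated by $r$ and $c$ is everything) and then that $\Phi$ is recoverable from the matrix data, e.g.\ as the conditional law of the $(0,0)$ entry given row $0$ and column $0$ under $D_f$, or via F{\o}lner averages of matrix entries together with martingale convergence. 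Your appeal to the ${\Bbb Z}^2$-action and the recovery lemma points in the right direction, but as written the final step asserts rather than constructs the recovery of $f$ off the grid.
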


\begin{proof}
To prove the lemma, it suffices to use the following characteristic property of pure functions (see \cite{V02,VH}). Let
$f\in L^1$ be a pure integrable function of two variables and $\{x_n,y_m\}_{(n,m)}$ be two sequences of i.i.d.\ points in
$(X_1,\mu)$ and $(X_2,\mu)$. Then for almost all such pairs of sequences, the collection of functions
$$ \phi_{x_n}(\cdot)=f(x_n,\cdot), \quad \psi_{y_m}(\cdot)=f(\cdot,y_m) $$ 
is a total set in $L^1(X_1\times X_2, \mu\times \mu)$.
\end{proof}

As proved in \cite{VH1}, the group of measure-preserving transformations leaving any measurable function invariant is compact. Therefore, the quotient by this group is well defined. However, we do not use this fact, since taking the quotient  by the group of symmetries may break the structure of a direct product of $\sigma$-algebras.

The symmetries of the function $f$ are defined by the condition $f(Tx, Sy)=f(x,y)$, where $S,T$ are invertible measure-preserving transformations. If the condition is satisfied only for $S=\operatorname{Id}$, $T=\operatorname{Id}$, then we say that {\it the function $f$ has the trivial (zero) group of symmetries}. In this case, the caveat in the statement of the lemma on the group of translations is not necessary, and we obtain the following theorem.

\begin{theorem}[recovery theorem]
The matrix distribution of a pure integrable function with the trivial symmetry group, regarded as a measure in the space of matrices, is 
a complete system of isomorphism invariants. In more detail, if two integrable pure functions with the trivial symmetry group have the same (i.e., isomorphic) matrix distributions and equal averages in the respective variables, then they are isomorphic.
\end{theorem}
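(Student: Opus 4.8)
The plan is to deduce the theorem from the completeness lemma proved just above (the one asserting that the matrix distribution \emph{together with} the action of $\mathbb{Z}^2$ by translations of rows and columns is a complete invariant), by showing that under the trivial-symmetry hypothesis the $\mathbb{Z}^2$-action no longer has to be carried as separate data: it is canonically reconstructed from $D_f$ itself. Concretely, given two pure integrable functions $f_1,f_2$ with trivial symmetry groups, equal averages, and an isomorphism $\Theta$ of their matrix distributions (which, per the convention fixed in the obvious lemma, already commutes with $S_{\mathbb{Z}}\times S_{\mathbb{Z}}$), I would reconstruct each $f_i$ directly from $D_{f_i}$ and verify that the reconstruction is rigid, so that $\Theta$ is forced to intertwine the two $\mathbb{Z}^2$-translation actions; the completeness lemma then gives $f_1\cong f_2$.

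For the reconstruction I would start from a $D_f$-typical matrix $M=\|f(x_n,y_m)\|$ and run the pointwise ergodic theorem along the amenable directions. Averaging a test function of finitely many chosen columns over the row index (F{\o}lner segments in the first $\mathbb{Z}$, as in the ergodic recovery lemma) returns, for every finite tuple $y_{m_1},\dots,y_{m_k}$, the joint law of $\bigl(f(x,y_{m_1}),\dots,f(x,y_{m_k})\bigr)$ under $x\sim\mu_1$, and symmetrically along columns. Since $f$ is pure, the maps $x\mapsto f(x,\cdot)$ and $y\mapsto f(\cdot,y)$ are $\bmod\,0$ one-to-one, and by the totality statement used in the completeness lemma the functions $\phi_{x_n},\psi_{y_m}$ are total in $L^1$; hence these joint laws determine the measure-isomorphism classes of $(X_1,\mu_1)$ and $(X_2,\mu_2)$ together with the embeddings $x\mapsto f(x,\cdot)$ and $y\mapsto f(\cdot,y)$, and thereby $f$ up to the isomorphism of Definition~1. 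The first-order ergodic averages along a single row and a single column produce exactly $\int f(x,\cdot)\,d\mu_2$ and $\int f(\cdot,y)\,d\mu_1$; the equal-averages hypothesis is what guarantees that these normalizing data agree for $f_1$ and $f_2$, and I expect this matching to be a routine step rather than the crux.

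The one genuine freedom in this procedure is the identification of the $\mathbb{Z}^2$-orbit of $M$ with the group $\mathbb{Z}^2$ --- equivalently, the enumeration (phase) of the Bernoulli sample $(x_n),(y_m)$. The crux of the argument, and the step I expect to be the main obstacle, is to show that two reconstructions differing by such a choice yield pure functions related by a pair of measure-preserving transformations $(T,S)$ with $f(Tx,Sy)=f(x,y)$, i.e.\ by an element of the symmetry group of $f$; here the compactness of this group established in \cite{VH1} makes the set of such ambiguities a tractable (compact) object. The delicate point is the converse bookkeeping --- promoting a mere coincidence of the reconstructed row/column statistics to honest $\bmod\,0$ isomorphisms of $X_1,X_2$ --- for which purity is exactly what is needed. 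Granting this, the reconstruction of $f$ from $D_f$ is canonical up to the action of the symmetry group; when that group is trivial it is canonical outright, $\Theta$ automatically respects the $\mathbb{Z}^2$-structure, and the completeness lemma finishes the proof.
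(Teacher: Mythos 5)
Your proposal follows essentially the same route as the paper: the paper derives the theorem from the completeness lemma for the matrix distribution equipped with the $\mathbb{Z}^2$-translation action (itself resting on the pointwise ergodic recovery lemma and the totality of the sections $\phi_{x_n},\psi_{y_m}$ for pure functions), observing that the only ambiguity in fixing that action is an element of the symmetry group, so triviality of that group makes the caveat unnecessary. Your identification of the enumeration/phase ambiguity with the symmetry group, and your use of the equal-averages hypothesis to match the first-order ergodic data, is precisely the intended argument, elaborated in somewhat more detail than the paper itself provides.
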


The nontriviality of the theorem lies in the fact that the challenging task of verifying the isomorphism of functions is reduced to  verifying some relations (equalities of integrals) that are relatively easy to check. The proof of the theorem ultimately relies on the pointwise ergodic theorem.

It should be emphasized that under the conditions of the theorem, the matrix distribution, regarded as a measure in the space of matrices, inherits the structure of a matrix space. However, in contrast to \cite{V02, VH}, we do not construct or use a universal model of a measurable function,  even though its construction as a Radon--Nikodym density on an infinite product is straightforward.

For the classification of symmetric functions (see below), the statements should be slightly modified.

\subsection{A necessary example}

The nontriviality of the recovery theorem becomes apparent even when considering simplest scenarios.

Consider the invariants of the following function of two variables ($k=2$) on the unit square $[0,1]^2$: 
$$f(x,y)=x+y \quad \bmod 1.$$
Correspondingly, consider two sequences, $\{(x_n)\}$, $\{(y_m)\}$, of independent points; the space of pairs $ \{(x_n,y_m)\}_{n,m \in \Bbb Z}$ (a grid on $[0,1]^2$); and the group of translations on the infinite product with the Bernoulli measure $(1/2,1/2)$ on $\{0,1\}$.
The mapping $F_f$ is as follows:
$$\{(x_n, y_m)\}_{n,m}\rightarrow \{(x_n+y_m)\}_{n,m}.$$

In this case, Theorem 1 says that $F_f$ is  $\bmod 0$ invertible, and that one can recover each summand (i.e., the function) given their sum (i.e., the matrix distribution). But a set of measure~$1$ on which the mapping $F_f$ is invertible can be described explicitly (!):
$$ \{(x_n,y_m): \lim_{r \rightarrow \infty} r^{-1}\sum_{k=1}^r(x_n,y_k)=(x_n,0), \quad \lim_r r^{-1}\sum_{k=1}^r(x_k,y_m)=(0,y_m)\}.$$

The fact that the measure of this set is equal to $1$ follows from the law of large numbers (ergodic theorem). Clearly, for all $n,m$ and a given sum $x_n + y_m$, the summands are uniquely recoverable.

As we see, in this case the mapping $F_f$ is invertible, meaning it is an isomorphism $\bmod 0$. Interestingly, this simple example demonstrates how, using the ergodic theorem, we can extract a~subset of ``right-hand sides'' of certain equations for which the system of equations $\{x+y=C\}$ is uniquely solvable. In this case, the construction is very straightforward, and there is an inversion formula. However, for an arbitrary function, we only obtain a theorem about the existence of such a set, and there is no explicit construction of the inverse mapping.

\begin{remark}{\rm
One should remember that all matrix distributions under consideration, regarded as subsets in the space of matrices, are equipped with a family of conditional limit distributions of all rows and columns; the claim``two matrix distributions coincide'' implies the coincidence of these limits. The specificity of matrix distributions as measures on the space of matrices lies in the fact that the usual description of measures via finite-dimensional distributions is inefficient, although this description is given explicitly (see the next section about Aldous's theory). It  does not directly imply neither the existence nor a description of the conditional limit distributions of rows and columns for almost all matrices. Apparently, this is the novelty of matrix distributions as measures in spaces of infinite matrices.}
\end{remark}

\subsection{Relation to Aldous's theory}

Recall that Aldous (see \cite{Al1,Al2,Kal}) generalized the classical De Finetti theorem for Bernoulli measures on sequences to the case of matrices by proving the following theorem (we present it in slightly different terms compared to the original ones): any ergodic measure on the set of infinite real matrices that is invariant with respect to any of the following groups:
\begin{itemize}
\item the group $S_{\Bbb N}\times S_{\Bbb N}$  of all permutations of rows and columns,

\item the group $\operatorname{DIAG}(S_{\Bbb N}\times S_{\Bbb N})$ of coinciding permutations of rows and columns,
\end{itemize}
is a probability Borel measure on the space of matrices $||f(\xi_i, \eta_j, \lambda_{i,j}) ||_{i,j}$ determined by a measurable function $f$ of three variables, each ranging over the interval $[0,1]$ and evaluated at the nodes of a~Bernoulli grid (in the first case); in the second case, we have the additional condition that $\eta_j \equiv \xi_j$. Here, $\xi_i, \eta_j,\lambda_{i,j}$ are independent sequences of independent random variables uniformly distributed on the interval $[0,1]$.
 
 We are interested in the special case of this result without  $\lambda_{i,j}$; its connection to the classification problem is as follows.

\begin{proposition}
Every Aldous measure with zero $\lambda$ is a matrix distribution of a measurable function of two variables and it uniquely determines the measure; the function can be assumed to be pure. In the second case, we mean the matrix distribution of a symmetric function (see the next section).
\end{proposition}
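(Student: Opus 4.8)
The plan is to observe that, once $\lambda$ is set to zero, Aldous's representation becomes precisely the definition of a matrix distribution, and then to invoke purification and the recovery theorem for the remaining assertions. First I would specialize Aldous's theorem to the case of zero $\lambda$: the ergodic $S_{\mathbb{N}}\times S_{\mathbb{N}}$-invariant measure is then the law of the random matrix $\|f(\xi_i,\eta_j)\|_{i,j}$ for some measurable function $f$ of two variables, where $(\xi_i)_i$ and $(\eta_j)_j$ are independent sequences of i.i.d.\ points uniformly distributed on $[0,1]$. Identifying $X_1=X_2=[0,1]$ with Lebesgue measure, the pair $((\xi_i)_i,(\eta_j)_j)$ is precisely a Bernoulli grid and $\|f(\xi_i,\eta_j)\|_{i,j}=F_f((\xi_i)_i\times(\eta_j)_j)$, so the given Aldous measure is nothing but $D_f=(F_f)_*(\mu^{\infty}\times\mu^{\infty})$. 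This settles the existence statement; that $f$ in turn determines the measure is immediate, since $D_f$ is by definition a pushforward and hence a function of $f$ alone.

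Next I would arrange for $f$ to be pure. For an arbitrary representing function one passes to its purification $\bar f$, obtained by collapsing each factor along the partition into maximal classes of points inducing the same restriction of $f$ to the complementary variable. As recorded above, the matrix distribution is unchanged under purification, so $D_{\bar f}=D_f$ is again the given Aldous measure, while $\bar f$ is pure by construction. Hence the representing function may always be taken pure.

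The only substantive point is the uniqueness of this pure representative, i.e.\ that the measure determines $f$ up to isomorphism, for which I would appeal to the recovery theorem (Theorem~1, through Lemma~3): the matrix distribution of a pure integrable function is a complete system of invariants, provided one retains the action of $\mathbb{Z}^2$ by translations of rows and columns (a caveat that is vacuous once the symmetry group is trivial). The main obstacle is exactly to see that this extra $\mathbb{Z}^2$-structure is present on an Aldous measure, whose hypotheses only supply invariance under $S_{\mathbb{N}}\times S_{\mathbb{N}}$. The resolution is to use two-sided grids: indexing $(\xi_i)_{i\in\mathbb{Z}}$ and $(\eta_j)_{j\in\mathbb{Z}}$ by $\mathbb{Z}$ (which is harmless, as the exchangeable structure is the same for $\mathbb{N}$ and $\mathbb{Z}$), the i.i.d.\ grid carries simultaneously both the permutation invariance exploited by Aldous and the $\mathbb{Z}^2$-translation action leaving $f$ invariant that Lemma~3 requires. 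Since passing between the two descriptions loses no information, Lemma~3 applies and the pure function is determined up to isomorphism; together with existence this yields the claimed bijection between Aldous measures with zero $\lambda$ and isomorphism classes of pure functions of two variables.

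Finally, for the symmetric case I would run the same argument with the group $\operatorname{DIAG}(S_{\mathbb{N}}\times S_{\mathbb{N}})$. Aldous's theorem then forces $\eta_j\equiv\xi_j$, so the matrix becomes $\|f(\xi_i,\xi_j)\|_{i,j}$, built from a single Bernoulli sequence; taking $f$ symmetric and pure, this is exactly the matrix distribution of a symmetric function, and completeness follows from the symmetric form of the recovery theorem (with the two transformations required to coincide), as promised in the next section.
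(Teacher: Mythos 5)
The paper offers no proof of this Proposition at all: it is stated in Section~2.4 and immediately followed by the remark that Aldous's result is not being discussed in detail and that ``a more detailed analysis \dots will be undertaken elsewhere.'' So there is nothing to compare against line by line; your proposal has to be judged on its own. What you write is the natural argument, assembled entirely from ingredients the paper itself supplies: specializing Aldous's representation to $\lambda=0$ gives literally the law of $\|f(\xi_i,\eta_j)\|_{i,j}$, which is $D_f$ by definition; that $f$ determines $D_f$ is tautological; and passing to the purification does not change the matrix distribution, so the representative may be taken pure. Up to this point the proposal is correct and proves everything the Proposition literally asserts.

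The one place where you go beyond the statement, and where your argument is genuinely thin, is the claimed uniqueness of the pure representative up to isomorphism (the ``bijection'' in your penultimate sentence). The paper is emphatic that the matrix distribution, as a bare measure on the space of matrices, is a complete invariant only if one either assumes the trivial symmetry group (Theorem~1) or retains the extra datum of the $\mathbb{Z}^2$-translation action intertwined correctly (Lemma~3). Your resolution --- that the two-sided i.i.d.\ grid ``carries both structures'' and that ``passing between the two descriptions loses no information'' --- asserts exactly the point at issue rather than proving it: the $\mathbb{Z}^2$-shift does preserve any exchangeable measure, but the question is whether an abstract identification of two matrix distributions can be chosen to commute with it, and the paper explicitly declines to claim this in general. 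Two further hypotheses of the recovery machinery are passed over silently: integrability of $f$ (a standing assumption the paper makes because the proof rests on the pointwise ergodic theorem, and one that an arbitrary Aldous representing function need not satisfy) and the ``equal averages in the respective variables'' condition in Theorem~1. None of this affects the correctness of the Proposition as stated, but the bijection you announce at the end is stronger than what your argument, or the paper, actually establishes.
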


Here, we are not discussing Aldous's result in  full generality, or even in the above-mentioned special case  (that with $\lambda=0$), but it should be emphasized that his result demonstrates that the  invariant measures on matrices we are interested in can be described in terms of measurable functions. In contrast, we want to use measures on matrices as invariants of functions and describe the properties of these measures. Apparently, such a description has not been provided so far. A more detailed analysis, including an attempt to prove Aldous's theorem using the author's ergodic method and to establish a connection between both problems, will be undertaken elsewhere. In Aldous's theorem, the case of $\lambda = 0$ (which was overlooked in \cite{74}) is degenerate, but it is the most interesting one precisely because it leads to measures (matrix distributions) whose properties differ  from those of well-studied random matrices with independent elements and the corresponding random spectrum (see below).

\section{The symmetric matrix distribution is a complete invariant of metric triples}

\subsection{Definition}

The classification of symmetric measurable functions of several variables, including metrics in measure spaces, is somewhat different from the general classification of arbitrary measurable functions in measure spaces.
For a detailed description of the joint axiomatics of spaces endowed with measures and metrics, see \cite{Gr,VVZ} and the references therein. We mean so-called metric triples (space, measure, metric) $(X, \mu, \rho)$, where $(X,\rho)$ is a Polish space equipped with a Borel continuous measure $\mu$ that is fully supported with respect to the metric $\rho$. The problem of finding a complete system of invariants under the group of measure-preserving isometries for such triples was posed and solved by Gromov in~\cite{Gr}. A~later proof of this result, suggested by the author, used other (ergodic) methods, in particular, those related to the classification of functions of several variables in Lebesgue spaces. Both proofs are presented in \cite{Gr} and the author's papers \cite{VUM}, see also \cite{VVZ}.

The definition of the matrix distribution of a function given above, which uses arbitrary two-dimensional grids, can be extended to metrics, thereby providing a complete invariant of metrics in the class of general functions of two variables. However, this definition  changes in a natural way when we modify the notion of isomorphism. In particular, when considering symmetric functions of two or more variables (e.g., metrics on measure spaces), it is natural to introduce special grids in order to consider the invariants of symmetric functions in $k$ variables under one and the same transformation of all variables:
$$f_2(Tx_1,\dots, Tx_k)=f_1(x_1,\dots, x_k).$$  The definition of the matrix distribution of a function in this situation (for $k=2$) should naturally be based on grids of the form $\{x_n,x_m\}_{n,m}$.

\begin{definition}
The matrix distribution of a symmetric function $f$ of two variables is a measure in the space of infinite matrices defined by the formula
  $$D_{\rho}=(F_f)_*(\mu^{\infty}\times \mu^{\infty}),$$
 that is, the image  of $\mu^{\infty}\times \mu^{\infty}$ under the following mapping:
 $$F_f (\{x_n\}_n\times \{x_m\}_m)=||f(x_n, x_m)||_{(n,m)\in {\Bbb Z}^2}.$$
In particular, the matrix distribution of a metric, regarded as a measurable function $\rho$ on a measure space $(X, \mu)$, is a measure on the space of {\it distance matrices}, i.e., metrics on a countable space ($\mathbb{Z}$ for two-sided grids and $\mathbb{Z}_+$ for one-sided grids).
\end{definition}

Note that not every ergodic measure on the set of distance matrices that is invariant under simultaneous permutations
is a matrix distribution of a metric triple. For example, for the Bernoulli measure on symmetric $0-1$ matrices, which defines a random (universal) graph, the triangle inequality is automatically satisfied, and this measure is not a matrix distribution of any metric triple, because, in particular, the entropy condition (see below)  is not met.

\subsection{$mm$-entropy of metric triples}

For every metric measure space, the notion of $mm$-entropy is defined, which is the following function of $\epsilon$:
$$ H(\epsilon)=\min \big\{r\in \Bbb N: \mu\big(\bigcup_{i=1}^r V_i(\epsilon, x_i)\big)>1-\epsilon\big\}, $$
where  $V(\epsilon, x)$ is the ball of radius $\epsilon$ centered at $x$. Clearly, $H(\epsilon)$ is finite for finite $\epsilon$, and
we are interested in the germ of this function at zero, which is an important characteristic of the metric triple. This definition apparently has several authors, and one of them is the author of this paper, who used the notion of $mm$-entropy to define the so-called scaling entropy of automorphisms and catalytic invariants, which generalize the Shannon--Kolmogorov entropy of dynamical systems (see \cite{VM,VVZ}). However, it turned out that Shannon also defined the $mm$-entropy of metrics with measure in his famous paper~\cite{Sh} on information theory, although his definition went mostly unnoticed for a long time
(see~\cite{VM}). 
It is not difficult to understand that the $mm$-entropy of a metric triple can be computed from its matrix distribution as a limit of certain functionals of its finite-dimensional fragments. Furthermore, as shown in \cite{VVZ}, the finiteness of the limit is a sufficient condition for an ergodic invariant measure on distance matrices to be a matrix distribution of a metric.

\subsection{Comparing the proofs of the Gromov--Vershik recovery theorem}

Gromov \cite{Gr} proves,   in slightly different terms, the following recovery theorem:
{\it For a metric triple $(X, \mu, \rho)$, the measure on the set of infinite distance matrices that is the unique weak limit of the measures on $n$-dimensional distance matrices corresponding to random collections of $n$ independent identically distributed (according to $\mu$) points in $X$ is a complete set of invariants of $(X, \mu, \rho)$ with respect to the group of $\mu$-preserving isometries.}

The most challenging part is to prove the completeness, i.e., the fact that a metric triple can be uniquely recovered from a given weak limit of random distance matrices. The weak limit under consideration can be called the matrix distribution with respect to the limit over finite grids. Of course, the proof  requires estimates such as the moment method used in \cite{Gr}.

Another approach involves considering, from the very beginning, infinite {\it Bernoulli grids} and their distance matrices. The trick is that almost any (typical) grid allows one to recover the metric triple, because it is immediately clear that the metric can be recovered from a metric on a dense subset. Afterward, the ergodicity is only needed to recover the measure. However, as we have seen,  the recovery based on a single orbit is possible without using the properties of the metric itself, due to deeper ``individual'' reasons. The ergodic theorem plays an essential role, and Lemma 3, which follows from the pointwise ergodic theorem, explains why recovery is possible in the general case without appealing to specific properties of the function.

A question that remains is whether the weak limit of measures emerging from Gromov's ``finite'' approach coincides with the matrix distribution defined above via infinite Bernoulli grids. Without going into details, we assume that this is the case and pose a more general question:

\begin{problem} 
For a given measurable function (e.g., a metric on a measure space),
do the matrix distributions, regarded as measures in the space of matrices, constructed from {\rm 1)} Bernoulli grids, {\rm 2)}~locally finite grids, and {\rm 3)} grids constructed from stationary sequences satisfying the zero--one law, coincide?
\end{problem}

\subsection{Application of the matrix distribution of metrics}

The mapping that sends a metric triple to its matrix distribution can be used to study various properties of metrics. The Lipschitz continuity of this mapping is proved in $\cite{GK}$. Convergence (in the Hausdorff--Gromov sense) of metric measure spaces  can be expressed in terms of convergence of matrix distributions. Unfortunately, there are few examples of explicit computations. It is important to extend the definition of matrix distributions to spaces with poorer structures than that of a metric triple. A typical example: assume that for a sequence of metric triples, the sequence of the corresponding matrix distributions converges, as a sequence of measures in the space of matrices, to some measure that is not a matrix distribution. What can we say about the limits of these triples? The problem is to describe such limits.

\subsection{The spectrum of a matrix distribution}

In conclusion, we briefly touch on an issue important for applications: the spectrum of matrix distributions.
Consider the matrix distribution of a metric triple in the sense of Section~3.1. This is a~measure on distance matrices, i.e., symmetric nonnegative matrices satisfying the triangle inequality. We can ask about the behavior of the spectra of finite fragments of the matrix distribution, treating them as random vectors composed of (real) eigenvalues. Their asymptotic properties are of special interest.

If the matrix distribution is a complete invariant of metrics, it is natural to ask how complete the asymptotic spectral properties can be. This  analog of the famous M.~Kac's question, ``Can one hear the shape of a drum?'', for spectra of metric triples was posed by the author  in  \cite{VUM}. There are some observations about the spectra of metrics, as well as interesting experiments conducted on the author's  initiative in \cite{BBS}. However, there is no clear picture yet. It is important to investigate the asymptotic behavior of spectra not  for the distance matrices themselves, but for matrix distributions, i.e., the asymptotic behavior of metric triples.

It is natural to assume that the metric is always integrable, but not square-integrable with respect to the measure. In the case where it is  square-integrable, the limiting spectrum of the matrix distribution is always deterministic, and it is determined by the spectrum of an integral operator whose kernel coincides with the metric (see \cite{Kol}). Apparently, the first known example of a metric triple with a random limit spectrum is given in \cite{VP}; it is the Euclidean metric on the half-line with the Cauchy measure.

An important question is how close to the semicircle law the limit spectrum of matrix distributions of metric spaces can be. For example, {\it what are the limiting spectra of the metric on the Urysohn universal space} with respect to probability measures on this space?

\end{document}